\newtheorem*{theorem*}{Theorem}
\newcommand{\D}{D}																		
\newcommand{\PP}{\mathsf P}														
\newcommand{\R}{\mathbb R}														
\newcommand{\tr}{^\mathsf{T}}													
\newcommand{\eqd}{\stackrel{\mathclap{\mbox{\tiny{\emph{d}}}}}{=}}
\author{Stanislav Nagy}
\email{\randomize{nagy@karlin.mff.cuni.cz}}
\address{
	Charles University,
	Department of Probability and Math. Statistics,
	Sokolovsk\'a 83, 186 75  Praha 8,
	Czech Republic
}
\title{Halfspace depth does not characterize probability distributions}
\date{\today}
\keywords{halfspace depth, Tukey depth, characterization}
\subjclass[2010]{62H05; 62G35}
\begin{document}

\begin{abstract}
We give examples of different multivariate probability distributions whose halfspace depths coincide at all points of the sample space. 
\end{abstract}

\maketitle

For a $d$-variate random vector $X$ with distribution $P$ and $x \in \R^d$, the halfspace (or Tukey) depth of $x$ with respect to $P$ is given by
	\[	\D(x;P) = \inf_{u \in \R^d \setminus \{0\}} \PP\left( u\tr X \leq u\tr x \right).	\]
This function, proposed by \citet{Tukey1975}, constitutes a base for nonparametric statistical analysis of multivariate data. A vast body of literature on the depth, and depth-based statistical procedures exists; see, for instance, \citet{Donoho_Gasko1992}, \citet{Liu_etal1999}, \citet{Rousseeuw_Ruts1999}, \citet{Zuo_Serfling2000}, and \citet{Einmahl_etal2015}.

The depth characterization conjecture is perhaps the most fundamental problem in the theory of data depth. The conjecture states that for any two distinct probability distributions $P, Q$ in $\R^d$ there exists a point $x \in \R^d$ at which the depths $\D(x;P)$ and $\D(x;Q)$ differ. It is easy to see that the conjecture holds true for $d=1$. A positive answer for $d\geq 2$ would justify the pursuit of depth-based inference for multivariate data, as there would be a one-to-one map between all depth surfaces and probability distributions. The depth characterization problem was studied by many authors, including \citet{Struyf_Rousseeuw1999}, \citet{Koshevoy2002, Koshevoy2003}, \citet{Hassairi_Regaieg2007, Hassairi_Regaieg2008}, \citet{Cuesta_Nieto2008c}, and \citet{Kong_Zuo2010}. From these results we know that the conjecture holds true if, for instance, $P$ and $Q$ have finite support, or if their depths satisfy certain smoothness conditions. 

In this note we show that the general conjecture is not true. We do so by constructing two different probability distributions with the same depth.

\begin{theorem*}
For all $d\geq 2$ there exist probability distributions $P, Q$ in $\R^d$ such that $P \ne Q$, and $\D(x;P) = \D(x;Q)$ for all $x \in \R^d$. 
\end{theorem*}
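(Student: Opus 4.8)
The plan is to reduce the problem to the plane and then build the two measures as a fixed measure together with an \emph{invisible} perturbation of it.

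First I would reduce to $d=2$. Observe that if a probability measure $P$ is supported on a $2$-dimensional affine subspace $V\subset\R^d$, then its halfspace depth is completely controlled by the geometry on $V$: writing any direction $u\in\R^d$ as $u=u_V+u_\perp$ with $u_V$ in the direction space of $V$ and $u_\perp$ orthogonal to it, one has $u\tr X=u_V\tr X+\text{const}$ almost surely and $u\tr x=u_V\tr x+\text{const}$ for $x\in V$, so $\PP(u\tr X\le u\tr x)$ depends only on $u_V$. Hence for $x\in V$ the infimum over $\R^d\setminus\{0\}$ equals the intrinsic planar depth of $x$ within $V$, while for $x\notin V$ a direction orthogonal to $V$ separates $x$ from the support and forces $\D(x;P)=0$. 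Consequently, if I produce $P\ne Q$ in $\R^2$ with equal depth, embedding $\R^2\cong V\hookrightarrow\R^d$ yields distinct measures in $\R^d$ whose depths agree everywhere. So it suffices to work in $\R^2$.

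Second, I would reformulate ``equal depth'' in a form that exposes the available slack. Since $\D(x;P)\ge\alpha$ iff $u\tr x\ge q_\alpha(u)$ for every direction $u$, where $q_\alpha(u)$ is the lower $\alpha$-quantile of the projection $u\tr X$, the depth region $\D_\alpha(P):=\{x:\D(x;P)\ge\alpha\}$ is the intersection $\bigcap_u\{x:u\tr x\ge q_\alpha(u)\}$ of halfspaces, and the whole depth function is equivalent to the family $(\D_\alpha)_\alpha$. The essential point is that an intersection of halfspaces is insensitive to the precise value of $q_\alpha(u)$ in any direction $u$ whose halfspace is redundant (inactive) in the intersection. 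Thus two measures can share every depth region, and hence the entire depth function, while disagreeing on their projection quantiles along inactive directions — a condition far weaker than equality of the measures. I would then realise this slack by a balanced perturbation: fix a base probability measure $P$ whose depth landscape is rigid and explicitly known (for instance one with polygonal depth regions, so that at each level only finitely many directions are active) and set $Q=P+\mu$ for a nonzero finite signed measure $\mu$ with $\mu(\R^2)=0$ and $\mu\ge -P$, so that $Q$ is again a probability measure. To force $\D(\cdot;Q)=\D(\cdot;P)$ it suffices to arrange, for every $x$, that (a) some halfspace $H^\ast$ attaining the $P$-depth at $x$ satisfies $\mu(H^\ast)=0$, so it remains a minimiser for $Q$; and (b) $P(H)+\mu(H)\ge\D(x;P)$ for every halfspace $H$ with $x$ on its boundary, so that the perturbation never lowers the infimum. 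A natural way to secure (a) and (b) at once is to place the positive and negative parts of $\mu$ inside a small region $R$ sitting in a ``depth shadow,'' chosen so that every depth-minimising halfspace either contains $R$ entirely (whence $\mu(H)=\mu(\R^2)=0$) or misses $R$ entirely (again $\mu(H)=0$), and is never cut by an active halfspace.

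The main obstacle is exactly this last, global, consistency requirement: a single nonzero $\mu$ must be simultaneously invisible to the depth-minimising halfspaces of every point $x$ and every level $\alpha$, while both $P$ and $P+\mu$ remain genuine probability measures. Meeting it forces strong demands on the base measure $P$ — its family of active halfspaces must be understood exactly and must leave a genuine shadow region $R$ that no active halfspace ever splits — after which verifying condition (b) uniformly over all points is the technical heart of the argument. I expect the bulk of the difficulty to lie in exhibiting one explicit such pair $(P,\mu)$ and checking invisibility everywhere, rather than in the planar reduction or the quantile reformulation, which are routine.
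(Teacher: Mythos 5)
Your planar reduction and the reformulation via depth regions and inactive directions are both sound --- indeed, the slack you identify (depth only pins down the projection quantiles along depth-minimising directions) is precisely the mechanism that makes a counterexample possible. But as a proof the proposal has a genuine gap: the pair $(P,\mu)$ is never produced, and all of the difficulty is concentrated in exactly the deferred step. Your condition (b) is not a routine verification: for halfspaces $H$ through $x$ that are \emph{nearly} minimising, the available slack $P(H)-\D(x;P)$ tends to $0$, so you must prove $\mu(H)\geq -\bigl(P(H)-\D(x;P)\bigr)$ with a rate comparison as $H$ approaches the active family, uniformly over all $x$ and all levels; nothing in the outline controls this. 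Moreover, the ``shadow region'' mechanism is internally inconsistent as stated: for $x$ in the interior of $R$, any minimising halfspace at $x$ has $x$ on its boundary and therefore cuts $R$, so it can neither contain nor miss $R$; you would be forced to take $R$ with empty interior and rely on exact cancellation of $\mu$ inside cut halfspaces, which is a different (and unproven) argument. Finally, your suggested base measures are risky: measures with finite support, atomic measures, and measures with suitably smooth depth contours are all known to be characterized by their depth \citep{Koshevoy2002, Struyf_Rousseeuw1999, Cuesta_Nieto2008c, Kong_Zuo2010}, so a base $P$ with ``polygonal depth regions and finitely many active directions'' must be a rather special nonatomic measure --- which is again exactly the missing construction.

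For comparison, the paper proves the theorem by exhibiting one explicit pair: the $\alpha$-symmetric stable distributions with characteristic functions $\exp\bigl(-\Vert t\Vert_1^{1/2}\bigr)$ and $\exp\bigl(-\Vert t\Vert_{1/2}^{1/2}\bigr)$. The projection property $u\tr Z \eqd \Vert u \Vert_\alpha Z_1$ of $\alpha$-symmetric laws, the coincidence of the univariate marginals (common distribution function $F$), and a H\"older-duality identity give
\begin{equation*}
\D(x;P)=\D(x;Q)=F\left(-\left\Vert x\right\Vert_\infty\right)\quad\mbox{for all }x\in\R^d,
\end{equation*}
directly in every dimension $d\geq 2$, with no planar reduction needed. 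One can read this example as the successful global instance of your scheme: the active directions are the coordinate axes, along which the two measures' projections agree (equivalently, $\mu=Q-P$ has vanishing one-dimensional coordinate marginals), while along inactive directions the projections genuinely differ but the closed-form expression $F\bigl(u\tr x/\Vert u\Vert_\alpha\bigr)$ verifies your condition (b) exactly --- the step your proposal leaves open and which, without the stable-law structure, there is no evident way to carry out.
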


\begin{proof}
For $t = \left(t_1, \dots, t_d\right)\tr \in \R^d$ and $0<\alpha<\infty$ denote $\left\Vert t \right\Vert_{\alpha} = \left( \sum_{i=1}^d \left\vert t_i \right\vert^\alpha \right)^{1/\alpha}$; for $\alpha = \infty$ set $\left\Vert t \right\Vert_\infty = \max_{i=1,\dots,d} \left\vert t_i \right\vert$.

Consider $X = \left(X_1, \dots, X_d\right)\tr \sim P$ and $Y = \left(Y_1, \dots, Y_d\right)\tr \sim Q$ given by their characteristic functions
	\begin{equation*}	
	\begin{aligned}
	\psi_X(t) & = \exp\left(-\left\Vert t \right\Vert_{1}^{1/2} \right) = \exp\left(-\sqrt{ \sum_{j=1}^d \left\vert t_j \right\vert} \right) \\
	\psi_Y(t) & = \exp\left(-\left\Vert t \right\Vert_{1/2}^{1/2} \right) = \exp\left(- \sum_{j=1}^d \sqrt{\left\vert t_j \right\vert} \right)
	\end{aligned} \quad\mbox{for }t = \left(t_1, \dots, t_d \right)\tr \in\R^d.
	\end{equation*}
In what follows we show that the random vectors $X$ and $Y$ are well defined, with different distributions, yet 
	\begin{equation*}	\label{D}
	\D(x;P) = \D(x;Q) = F\left(-\left\Vert x \right\Vert_{\infty} \right)	\quad\mbox{for all }x\in\R^d	
	\end{equation*}
for a fixed function $F$.

Functions $\psi_X$ and $\psi_Y$ are both positive definite. That follows by results of \citet{Levy1937} and \citet{Schoenberg1938}, see \citet[Section~1]{Koldobsky2009} or \citet[Problem~2]{Zastavnyi2000}. Thus, by Bochner's theorem \citep[Theorem~1.8.9]{Ushakov1999}, random vectors $X$ and $Y$ are well defined, and $P \ne Q$. The characteristic function $\psi_X$ depends on $t$ only through $\left\Vert t \right\Vert_\alpha$ with $\alpha = 1$. Thus, $P$ takes the form of a so-called $\alpha$-symmetric distribution \citep[Definition~7.1]{Fang_etal1990}. Likewise, $Q$ is $\alpha$-symmetric with $\alpha = 1/2$. For any $d$-dimensional random vector $Z = \left(Z_1, \dots, Z_d \right)\tr$ with $\alpha$-symmetric distribution, $\alpha \in (0,2]$, it is known that $u\tr Z \eqd \left\Vert u \right\Vert_\alpha Z_1$ for all $u \in \R^d$ \citep[Theorem~7.1]{Fang_etal1990}. Here, $\eqd$ stands for ``is equal in distribution".

The characteristic functions of marginal distributions $X_1$ and $Y_1$ are given by
	\[	
	\psi_X\left((t_1,0, \dots, 0)\tr\right) = \psi_Y\left((t_1,0, \dots, 0)\tr\right) = \exp\left(-\sqrt{\left\vert t_1 \right\vert} \right) \quad\mbox{for }t_1 \in \R.
	\]	
Therefore, since characteristic functions uniquely determine distributions \citep[page~54]{Ushakov1999}, $X_1 \eqd Y_1$. Denote the common distribution function of $X_1$ and $Y_1$ by $F$. 

For $x \in \R^d$ we can write
	\[
	\begin{aligned}
	\D(x;P) & = \inf_{u \in \R^d \setminus \{0\}} \PP\left( u\tr X \leq u\tr x \right) = \inf_{u \in \R^d \setminus \{0\}} \PP\left( \left\Vert u \right\Vert_{\alpha} X_1 \leq u\tr x \right) \\
	& = \inf_{u \in \R^d \setminus \{0\}} F\left( \frac{u\tr x}{\left\Vert u \right\Vert_{\alpha}} \right) = F\left( \inf_{u \in \R^d \setminus \{0\}} \frac{u\tr x}{\left\Vert u \right\Vert_{\alpha}} \right)
	\end{aligned}
	\]
with $\alpha = 1$, and likewise 
	\[	\D(x;Q) = F\left( \inf_{u \in \R^d \setminus \{0\}} \frac{u\tr x}{\left\Vert u \right\Vert_{\alpha}} \right)	\]
with $\alpha = 1/2$. It remains to show that 
	\[	\inf_{u \in \R^d \setminus \{0\}} \frac{u\tr x}{\left\Vert u \right\Vert_{1}} = \inf_{u \in \R^d \setminus \{0\}} \frac{u\tr x}{\left\Vert u \right\Vert_{1/2}} = - \left\Vert x \right\Vert_\infty,	\]
which follows from a version of H\"older's inequality \citep[Lemma~A.1]{Chen_Tyler2004}.
\end{proof}

Note that measures $P$ and $Q$ constructed in the proof above are fairly common in statistics and probability --- they are both multivariate extensions of the L\'evy symmetric $1/2$-stable distribution \citep{Levy1937}. Other probability measures whose depths coincide are immediately available from our construction. These include families of multivariate $\alpha$-stable distributions with any $0< \alpha \leq 1$, or distributions whose characteristic functions depend on the Minkowski functional of certain symmetric star bodies \citep{Koldobsky2009}. An interesting open question that remains to be settled is a complete description of probability distributions characterized by their halfspace depth. 

\subsection*{Acknowledgement}
Supported by the grant 18-00522Y of the Czech Science Foundation, and by the PRIMUS/17/SCI/3 project of Charles University.

\def\cprime{$'$} \def\polhk#1{\setbox0=\hbox{#1}{\ooalign{\hidewidth
  \lower1.5ex\hbox{`}\hidewidth\crcr\unhbox0}}}


\begin{thebibliography}{}

\bibitem[Chen and Tyler, 2004]{Chen_Tyler2004}
Chen, Z. and Tyler, D.~E. (2004).
\newblock On the behavior of {T}ukey's depth and median under symmetric stable
  distributions.
\newblock {\em J. Statist. Plann. Inference}, 122(1-2):111--124.

\bibitem[Cuesta-Albertos and Nieto-Reyes, 2008]{Cuesta_Nieto2008c}
Cuesta-Albertos, J.~A. and Nieto-Reyes, A. (2008).
\newblock The {T}ukey and the random {T}ukey depths characterize discrete
  distributions.
\newblock {\em J. Multivariate Anal.}, 99(10):2304--2311.

\bibitem[Donoho and Gasko, 1992]{Donoho_Gasko1992}
Donoho, D.~L. and Gasko, M. (1992).
\newblock Breakdown properties of location estimates based on halfspace depth
  and projected outlyingness.
\newblock {\em Ann. Statist.}, 20(4):1803--1827.

\bibitem[Einmahl et~al., 2015]{Einmahl_etal2015}
Einmahl, J. H.~J., Li, J., and Liu, R.~Y. (2015).
\newblock Bridging centrality and extremity: refining empirical data depth
  using extreme value statistics.
\newblock {\em Ann. Statist.}, 43(6):2738--2765.

\bibitem[Fang et~al., 1990]{Fang_etal1990}
Fang, K.~T., Kotz, S., and Ng, K.~W. (1990).
\newblock {\em Symmetric multivariate and related distributions}, volume~36 of
  {\em Monographs on Statistics and Applied Probability}.
\newblock Chapman and Hall, Ltd., London.

\bibitem[Hassairi and Regaieg, 2007]{Hassairi_Regaieg2007}
Hassairi, A. and Regaieg, O. (2007).
\newblock On the {T}ukey depth of an atomic measure.
\newblock {\em Stat. Methodol.}, 4(2):244--249.

\bibitem[Hassairi and Regaieg, 2008]{Hassairi_Regaieg2008}
Hassairi, A. and Regaieg, O. (2008).
\newblock On the {T}ukey depth of a continuous probability distribution.
\newblock {\em Statist. Probab. Lett.}, 78(15):2308--2313.

\bibitem[Koldobsky, 2009]{Koldobsky2009}
Koldobsky, A. (2009).
\newblock A note on positive definite norm dependent functions.
\newblock In {\em High dimensional probability {V}: the {L}uminy volume},
  volume~5 of {\em Inst. Math. Stat. (IMS) Collect.}, pages 30--36. Inst. Math.
  Statist., Beachwood, OH.

\bibitem[Kong and Zuo, 2010]{Kong_Zuo2010}
Kong, L. and Zuo, Y. (2010).
\newblock Smooth depth contours characterize the underlying distribution.
\newblock {\em J. Multivariate Anal.}, 101(9):2222--2226.

\bibitem[Koshevoy, 2002]{Koshevoy2002}
Koshevoy, G.~A. (2002).
\newblock The {T}ukey depth characterizes the atomic measure.
\newblock {\em J. Multivariate Anal.}, 83(2):360--364.

\bibitem[Koshevoy, 2003]{Koshevoy2003}
Koshevoy, G.~A. (2003).
\newblock Lift-zonoid and multivariate depths.
\newblock In {\em Developments in robust statistics ({V}orau, 2001)}, pages
  194--202. Physica, Heidelberg.

\bibitem[L\'evy, 1937]{Levy1937}
L\'evy, P. (1937).
\newblock {\em Th\'eorie de l'addition de variables al\'eatoires}.
\newblock Gauthier-Villars, Paris.

\bibitem[Liu et~al., 1999]{Liu_etal1999}
Liu, R.~Y., Parelius, J.~M., and Singh, K. (1999).
\newblock Multivariate analysis by data depth: descriptive statistics, graphics
  and inference.
\newblock {\em Ann. Statist.}, 27(3):783--858.

\bibitem[Rousseeuw and Ruts, 1999]{Rousseeuw_Ruts1999}
Rousseeuw, P.~J. and Ruts, I. (1999).
\newblock The depth function of a population distribution.
\newblock {\em Metrika}, 49(3):213--244.

\bibitem[Schoenberg, 1938]{Schoenberg1938}
Schoenberg, I.~J. (1938).
\newblock Metric spaces and positive definite functions.
\newblock {\em Trans. Amer. Math. Soc.}, 44(3):522--536.

\bibitem[Struyf and Rousseeuw, 1999]{Struyf_Rousseeuw1999}
Struyf, A. and Rousseeuw, P.~J. (1999).
\newblock Halfspace depth and regression depth characterize the empirical
  distribution.
\newblock {\em J. Multivariate Anal.}, 69(1):135--153.

\bibitem[Tukey, 1975]{Tukey1975}
Tukey, J.~W. (1975).
\newblock Mathematics and the picturing of data.
\newblock In {\em Proceedings of the {I}nternational {C}ongress of
  {M}athematicians ({V}ancouver, {B}. {C}., 1974), {V}ol. 2}, pages 523--531.
  Canad. Math. Congress, Montreal, Que.

\bibitem[Ushakov, 1999]{Ushakov1999}
Ushakov, N.~G. (1999).
\newblock {\em Selected topics in characteristic functions}.
\newblock Modern Probability and Statistics. VSP, Utrecht.

\bibitem[Zastavnyi, 2000]{Zastavnyi2000}
Zastavnyi, V.~P. (2000).
\newblock On positive definiteness of some functions.
\newblock {\em J. Multivariate Anal.}, 73(1):55--81.

\bibitem[Zuo and Serfling, 2000]{Zuo_Serfling2000}
Zuo, Y. and Serfling, R. (2000).
\newblock General notions of statistical depth function.
\newblock {\em Ann. Statist.}, 28(2):461--482.

\end{thebibliography}


\end{document}